\newtheorem{thm}{Theorem}[section]
\newtheorem{lem}[thm]{Lemma}
\newtheorem*{lem*}{Lemma}
\theoremstyle{remark}
\theoremstyle{definition}
\newcounter{my_enumerate_counter}
\newcommand\comment[1]{}
\renewcommand{\epsilon}{\varepsilon}
\title[von Neumann-Day problem]
{A geometric solution to the von {N}eumann-Day 
problem for finitely presented groups}
\keywords{space of marked groups, left orderable, free group, piecewise, projective, torsion free}
\subjclass[2010]{Primary: 43A07; Secondary: 20F05}
\title[Approximating the free group by groups of homeomorphisms]
{Approximating nonabelian free groups by groups of homeomorphisms of the real line.}
\subjclass[2010]{Primary: 43A07; Secondary: 20F05}
\thanks{
The author thanks Matt Brin and Sang-hyun Kim for comments on the exposition.
This research has been supported by a Swiss national science foundation ``Ambizione" grant.}
\author{Yash Lodha}
\address{EPFL
\\ Lausanne\\ Switzerland}
\email{{\tt yash.lodha@epfl.ch}}
\begin{document}

\begin{abstract}
We show that for a large class $\mathcal{C}$ of finitely generated groups of orientation preserving homeomorphisms of the real line, the following holds: 
Given a group $G$ of rank $k$ in $\mathcal{C}$, there is a sequence of $k$-markings $(G, S_n), n\in \mathbf{N}$ whose limit in the space of
marked groups is the free group of rank $k$ with the standard marking.
The class we consider consists of groups that admit actions satisfying mild dynamical conditions and a certain ``self-similarity" type hypothesis.
Examples include Thompson's group $F$, Higman-Thompson groups, Stein-Thompson groups, various Bieri-Strebel groups, the golden ratio Thompson group,
and finitely presented nonamenable groups of piecewise projective homeomorphisms.
For the case of Thompson's group $F$ we provide a new and considerably simpler proof of this fact proved by Brin in \cite{Brin}.
\end{abstract}

\maketitle

\section{Introduction}

The group $\textup{Homeo}^+(\mathbf{R})$ is the group of orientation preserving homeomorphisms of $\mathbf{R}$.
The study of this group and its subgroups is a topic that has received attention in recent years (See \cite{DeroinNavasRivas} and \cite{NavasICM}, for instance).
In this article we consider finitely generated subgroups of $\textup{Homeo}^+(\mathbf{R})$ that satisfy certain natural dynamical conditions,
and their markings in \emph{the space of marked groups}:
The term refers to a family of spaces, and each individual space is denoted as $\mathcal{G}_m$, for $m\in \mathbf{N}$. 
This consists of pairs $(G,S)$ where $G$ is a finitely generated group and $S$ is a finite ordered generating set for $G$ of cardinality $m$.
These are considered up to the following notion of equivalence:
$(G_1,S_1)$ and $(G_2,S_2)$ are equivalent if there is an isomorphism $G_1\to G_2$ that induces an order preserving bijection $S_1\to S_2$.

The set $\mathcal{G}_m$ is equipped with a natural metric. 
The marked groups $(G_1,S_1)$ and $(G_2,S_2)$ are $e^{-n}$ apart if $n$ is the largest number such that the $n$-balls in the respective marked Cayley graphs are isomorphic.
This makes $\mathcal{G}_m$ a totally disconnected compact metrizable space.
This space was considered by Gromov in \cite{Gromov} in his celebrated work on groups of polynomial growth.
The space was first systematically studied by Grigorchuk in \cite{Grigorchuk}, and has an antecedent in the Chabauty topology (see \cite{Bridson}).

It is natural to inquire about the closure properties of certain families of groups (with varying markings) in this space.
This can be done for certain classes (such as free groups or hyperbolic groups, see \cite{ChampetierGuirardel} and \cite{Champetier}),
or for an individual group with varying markings.
In this article we consider the latter.
We consider group actions $G<\textup{Homeo}^+(\mathbf{R})$ that satisfy the following:
\begin{enumerate}
\item The action is minimal, i.e. the orbits are dense.
\item The groups of germs at $\pm\infty$ are conjugate to a subgroup of the affine group.
\item There exist elements $f,g$ and real numbers $r_1<r_2$ such that for each $x\in (-\infty, r_1), y\in (r_2,\infty)$ we have $$x\cdot f=x\qquad y\cdot f\neq y\qquad x\cdot g\neq x\qquad y\cdot g=y$$ 
\item There is a compact interval $I$ such that the subgroup of elements that fix each point in $\mathbf{R}\setminus I$ is finitely generated.
\end{enumerate}

The class of groups that admit actions satisfying the above shall be denoted by $\mathcal{C}$.
Groups in this family include Thompson's group $F$ (\cite{CFP}), Higman-Thompson groups $F_n$, 
Stein-Thompson groups (\cite{Stein}), various Bieri-Strebel groups (\cite{BieriStrebel}), the finitely presented nonamenable groups of piecewise projective homeomorphisms constructed by the author with Justin Moore (\cite{LodhaMoore}),
and Cleary's ``golden ratio" Thompson group (\cite{BRN}) among various other examples.
Note that the standard actions of many of these examples are prescribed as homeomorphisms of a compact interval.
However, these actions are easily seen to be topologically conjugate to actions on the real line that satisfy the above. 
Recall that the \emph{rank} of a finitely generating group is the smallest number $k\in \mathbf{N}$ such that the group admits a generating set of size $k$.
We prove the following.

\begin{thm}\label{Main}
Let $G\in \mathcal{C}$ be a group of rank $k$.
Then there is a sequence of $k$-markings $(S_n)_{n\in \mathbf{N}}$ of $G$ such that the nonabelian free group $\mathbf{F}_k$ with the standard marking is the limit of $(G,S_n)_{n\in \mathbf{N}}$ in $\mathcal{G}_k$.
\end{thm}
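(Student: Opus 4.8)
The plan is to restate the conclusion combinatorially and then build the markings by hand. Recall that saying $\mathbf{F}_k$ with its standard marking is the limit of $(G,S_n)$ in $\mathcal{G}_k$ means precisely that the radius of agreement of the two marked Cayley graphs tends to infinity; since the Cayley graph of $\mathbf{F}_k$ is a $2k$-regular tree carrying no relations, this is equivalent to producing ordered generating sets $S_n=(s_1,\dots,s_k)$ of $G$ together with radii $R_n\to\infty$ such that every nonempty freely reduced word of length at most $R_n$ in $s_1^{\pm1},\dots,s_k^{\pm1}$ represents a nontrivial element of $G$. Thus the whole problem reduces to exhibiting markings with ``no short relations,'' where the permitted length of a relation is pushed to infinity. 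I would prove the theorem by constructing such $S_n$ and verifying (a) that they still generate $G$ and (b) the absence of reduced relations up to length $R_n$.

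For the construction I would exploit the four hypotheses as follows. The minimality hypothesis (1) guarantees that for any nonempty open interval there is a group element carrying a large compact set inside it; combined with the affine-germ hypothesis (2) this lets me fix a genuinely contracting element and a small interval $J$ onto which an arbitrarily large portion of $\mathbf{R}$ is compressed, with uniform control near the two ends. The self-similarity hypothesis (4) then lets me install, inside $J$ and inside its iterated compressions, faithful copies of the finitely many generators needed, so that the line carries a nested self-similar cascade of copies of the relevant dynamics. Finally the directional elements $f,g$ of hypothesis (3) supply the north--south behavior: $f$ is trivial to the left and fixed-point free far to the right, $g$ vice versa, so suitable products have isolated attracting and repelling regions. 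Starting from a fixed rank-$k$ generating set $T=(t_1,\dots,t_k)$ of $G$, I would define $s_i$ by perturbing $t_i$ with deep conjugates by the contracting element, arranged so that the $k$ generators realize a ping-pong (Schottky) configuration valid through the first $R_n$ levels of the cascade.

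The verification then splits in two. For generation, the perturbing factors are themselves elements of $G$ recoverable from $S_n$, so that $T\subseteq\langle S_n\rangle$ and hence $\langle S_n\rangle=G$; keeping exactly $k$ generators, so that the marking lies in $\mathcal{G}_k$ and the rank is respected, is the constraint that dictates the precise form of the perturbation. For the absence of short relations I would run the ping-pong lemma level by level: reading a reduced word of length $\ell\le R_n$ forces a descent through $\ell$ stages of the self-similar structure, and the uniform contraction coming from hypothesis (2) guarantees that no cancellation between the attracting and repelling regions can occur before depth $R_n$, so the word moves some point and is nontrivial. As $R_n\to\infty$ this yields convergence of $(G,S_n)$ to $\mathbf{F}_k$ in $\mathcal{G}_k$.

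The main obstacle, and the point where the hypotheses must be used in an essential way, is the tension between the two halves of the verification. A free group satisfies ping-pong at every scale, whereas $G$ is typically not free (for instance Thompson's group $F$), so every genuine relation of $G$ must survive somewhere; the construction can only postpone it. The heart of the argument is therefore to \emph{localize the failure of freeness beyond radius $R_n$} while still generating the whole of $G$ with only $k$ elements. This is exactly what the self-similarity hypothesis (4) buys: it lets the relations of $G$ be pushed arbitrarily deep into the cascade, so that they are invisible to words of length at most $R_n$ yet reappear once a word is long enough to traverse the nested copies and return to the top level. Making the bookkeeping of this descent uniform, so that a single sequence $R_n\to\infty$ works simultaneously for all reduced words, is where the affine control at $\pm\infty$ from hypothesis (2) is indispensable.
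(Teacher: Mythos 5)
Your reduction of the statement to ``no freely reduced relation of length at most $R_n$'' is correct, and your instinct to perturb a fixed generating set by factors supported far away is indeed the shape of the actual argument. But there are two genuine gaps. First, the source of $n$-step freeness cannot be a ping-pong/Schottky configuration driven by contraction: Thompson's group $F$ (which is in $\mathcal{C}$) contains no nonabelian free subgroup at all by Brin--Squier, so no $k$-tuple in $F$ can satisfy ping-pong, and a ``ping-pong valid through the first $R_n$ levels of the cascade'' is not a self-sustaining notion --- you never identify what supplies $R_n$ steps of tree-like behaviour in a group where genuine freeness is impossible. The paper's mechanism at this point is non-dynamical: since $F$ satisfies no law, free groups embed in infinite direct products of copies of $F$, hence for each $n$ there are elements $\alpha_1^{(n)},\dots,\alpha_k^{(n)}$ of $F'$, and so of $H'$ (where $H$ is the finitely generated subgroup supported in a compact interval supplied by hypothesis (4)), whose marked $n$-ball is already free. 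The perturbation of $f_i$ is then a product of conjugates $f^{-1}\beta_i^{(n)}f$ over representatives $f$ of the $n$-ball, placed on pairwise disjoint translates of a carefully chosen small interval $I_n$, so that a reduced word of length at most $n$ in the new generators acts on $I_n$ exactly as the corresponding word in the $\alpha_i^{(n)}$, hence nontrivially. Hypothesis (2) enters only to make those translates disjoint and consistent (an affine germ has at most one fixed point, so far enough to the right all transition points of elements of $B_{2n}$ are avoided); it is not used for any ``uniform contraction.''

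Second, you assert that ``the perturbing factors are themselves elements of $G$ recoverable from $S_n$,'' but this is precisely the hard half of the theorem and cannot be waved through: $p_i^{(n)}$ is an element of $G$, yet there is no a priori reason it lies in $\langle S_n\rangle$. The paper recovers it by (i) building, out of the \emph{new} generators, ``special'' elements $\nu_1,\nu_2$ which fix a left ray pointwise and move every point of a right ray --- this requires the $\tau$-compatibility condition built into the choice of $I_n$, so that substituting $p_i^{(n)}f_i$ for $f_i$ in the words representing $\tau_1,\tau_2$ does not destroy their dynamics; (ii) conjugating $p_i^{(n)}$ by a large power of $\nu_1$ into $H'$; and (iii) proving $H'\le\langle S_n\rangle$ by a commutator trick in which the substituted generator words $\zeta_i$ agree with $h_i$ on $[0,1]$ and carry extra support in a compact set far to the right, which is pushed away by a power of $\nu_2$ before the commutator is formed. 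Without an argument of this kind you have only produced markings of some subgroup of $G$, not of $G$ itself, and keeping the generating set at exactly $k$ elements does nothing by itself to guarantee generation.
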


We remark that for the groups we consider, it is straightforward to find sequences of marked subgroups that approximate the free group in this sense
(see Lemma \ref{alpha}). 
However, the core difficulty in establishing Theorem \ref{Main} lies in finding the appropriate markings of the group itself.

Also note that as a consequence of Theorem \ref{Main} it follows that there is a sequence of $2$-markings of Thompson's group $F$ which limits to the standard marking of $\mathbf{F}_2$.
This was proved by Brin in \cite{Brin}. (For a related result, see \cite{AST}). 
Brin's proof is technically demanding, and although self contained, parts of it are obtained by computer assisted calculations. 
We provide a new and considerably simpler proof of this fact. 
For the convenience of the reader we prove this separately in Section $4$. 

Two key concepts play an important role in our proofs.
The first is the fact that the groups we consider do not satisfy a law, since they contain Thompson's group $F$ as a subgroup.
(For a proof that $F$ does not satisfy a law, we refer the reader to \cite{BrinSq}.)
The second is part $(4)$ of the definition of $\mathcal{C}$.
The finitely generated subgroup prescribed by this condition is sufficiently rich that it can supply carefully constructed elements.
Using these elements, we modify the generators of the ambient group so it has a cayley graph with the feature that the $n$-ball is isomorphic to the $n$-ball of the standard cayley graph of the free group of the same rank.
\section{Preliminaries}

All actions in this paper will be right actions.
Let $I=\mathbf{R}$ or a compact subinterval.
Given an element $g\in \textup{Homeo}^+(I)$, the \emph{support} of $g$, or $Supp(g)$, is defined as the set $\{x\in I\mid x\cdot g\neq x\}$.
The set of fixed points of $g$ that are accumulation points of $Supp(g)$ shall be denoted by $Tran(g)$, or the \emph{set of transition points of }$g$.




If $G<\textup{Homeo}^+(\mathbf{R})$, then we define the \emph{groups of germs} at $\pm \infty$ as follows.
First we define the groups $$G_{\infty}^+=\{g\in G\mid \exists r\in \mathbf{R}, g\text{ fixes each point in }(r,\infty)\}$$ $$G_{\infty}^-=\{g\in G\mid \exists r\in \mathbf{R}, g\text{ fixes each point in }(-\infty,r)\}$$
The group of germs at $\infty$ is $G/G_{\infty}^+$ and the group of germs at $-\infty$ is $G/G_{\infty}^-$.

Recall that in \cite{LodhaCoherent} we had defined the following notion.
We define a group action $G<\textup{Homeo}^+(\mathbf{R})$ to be \emph{coherent} if:
\begin{enumerate}
\item The action is minimal, i.e. the orbits are dense.
\item The groups of germs at $\pm\infty$ are solvable.
\item There exist elements $f,g$ and real numbers $r_1<r_2$ such that for each $x\in (-\infty, r_1), y\in (r_2,\infty)$ we have $$x\cdot f=x\qquad y\cdot f\neq y\qquad x\cdot g\neq x\qquad y\cdot g=y$$ 
\end{enumerate}

Hence the group actions described in the introduction that prescribe the definition of $\mathcal{C}$ satisfy the definition of a Coherent action.
We recall the following Lemmas that were proved in \cite{LodhaCoherent}.

\begin{lem}\label{F}
Let $G<\textup{Homeo}^+(\mathbf{R})$ be a coherent group action and let $F$ denote Thompson's group $F$.
Then there is a compact interval $J$ and elements $f,g\in G$ such that $$\langle f,g\rangle\cong F\qquad Supp(f),Supp(g)\subset J$$
\end{lem}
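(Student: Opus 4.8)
The plan is to build, inside $G$, two compactly supported ``bumps'' that overlap as a \emph{tight $2$-chain}, and then to recognise the group they generate as $F$. I will use the standard fact (provable by a ping-pong argument; it is the basic case of the theory of chain groups of homeomorphisms) that if $a,b\in\textup{Homeo}^+(\mathbf{R})$ each have support a single bounded open interval on which they push in the same direction, say $Supp(a)=(\alpha_0,\alpha_1)$ and $Supp(b)=(\beta_0,\beta_1)$ with $\alpha_0<\beta_0<\alpha_1<\beta_1$, and the overlap $(\beta_0,\alpha_1)$ is a single fundamental domain for the dynamics (the \emph{tightness} condition), then $\langle a,b\rangle\cong F$. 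So it suffices to manufacture such a pair from a coherent action, with both supports inside one compact interval $J$.

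The first task is to produce compactly supported elements at all, and here coherence does the work. Let $(f,g)$ be the elements from clause $(3)$. Since $f$ fixes $(-\infty,r_1)$ we have $f\in G_\infty^-$, and since $g$ fixes $(r_2,\infty)$ we have $g\in G_\infty^+$. Let $\pi_+\colon G\to G/G_\infty^+$ and $\pi_-\colon G\to G/G_\infty^-$ be the two germ homomorphisms, so $\pi_+(g)=1$ and $\pi_-(f)=1$. Any $t\in G$, being an orientation preserving homeomorphism, carries right rays to right rays, so $g^t=t^{-1}gt$ again fixes a right ray and lies in $G_\infty^+$; hence for every such $t$,
\[
\pi_+\big([f,g^t]\big)=\big[\pi_+(f),1\big]=1,\qquad \pi_-\big([f,g^t]\big)=\big[1,\pi_-(g^t)\big]=1,
\]
so $[f,g^t]\in G_\infty^+\cap G_\infty^-$ is compactly supported. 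Since the supports of $f$ and $g$ need not meet to begin with, I would use minimality to choose $t$ so that $Supp(g^t)$ genuinely overlaps $Supp(f)$; then $[f,g^t]$ is a nontrivial compactly supported element. By further commutators with conjugates, again positioned using density of orbits, I would shrink its support to a single interval, where it is automatically a single positive bump $a$; repeating with the overlap shifted produces a second bump $b$ whose bounded support meets $Supp(a)$.

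The final, and hardest, step is to arrange the two bumps into a \emph{tight} chain and to certify the isomorphism type. Positioning the supports so that they merely overlap is comparatively soft given minimality, but making the overlap \emph{exactly} one fundamental domain — so that the ping-pong tables close up and no relation beyond those of $F$ survives — requires choosing the conjugating elements with care, exploiting the freedom that $G$ moves points densely to land the relevant endpoints in the order dictated by the tightness inequality. Once the chain is tight, a transition-point/ping-pong analysis on the overlap shows that a nontrivial reduced word in $a$ and $b$ must displace some point, giving faithfulness of the surjection $F\to\langle a,b\rangle$ and hence $\langle a,b\rangle\cong F$; taking $J$ to be any compact interval containing $Supp(a)\cup Supp(b)$ and $(a,b)$ as the required pair finishes the argument. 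I expect the genuine obstacle to be precisely this tightness: extracting exact dynamical alignment from the density-only control that minimality supplies, rather than the (comparatively routine) production of compactly supported bumps.
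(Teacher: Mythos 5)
Your strategy is genuinely different from the paper's, and as written it is not a complete proof. The paper does essentially no dynamical work for this lemma: it cites part (1) of Theorem 1.1 of \cite{LodhaCoherent}, which already produces a copy of $F$ inside any coherent action, and then upgrades to compact support purely algebraically, using that $F$ embeds in $F'=F''$ while the germ groups at $\pm\infty$ are solvable; hence a copy of $F$ located inside a sufficiently deep derived subgroup has trivial germs at $\pm\infty$, i.e.\ compactly supported generators, and one takes $J$ to contain the supports of the two generators. You are instead attempting to reprove the cited existence theorem from scratch via chain groups. Your first step is sound: $f\in G_{\infty}^-$, $g^t\in G_{\infty}^+$ for every $t$, so $[f,g^t]\in G_{\infty}^+\cap G_{\infty}^-$ is compactly supported, and minimality can be used to make it nontrivial.

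There are, however, two genuine gaps. First, the passage from a nontrivial compactly supported element to an element whose support is a \emph{single} bounded interval on which it pushes in one direction is dispatched in one clause (``by further commutators with conjugates \dots\ I would shrink its support to a single interval, where it is automatically a single positive bump''), but this is a real construction: a compactly supported homeomorphism may have infinitely many components of support, one cannot restrict to a single component while staying inside $G$, and isolating one-bump elements is precisely one of the main technical points of the theorem you are trying to reprove. Second, you explicitly leave the tightness step open and identify it as the obstacle; here you are making the problem harder than necessary, since the chain-group machinery you invoke includes the statement that for \emph{any} $2$-prechain $(a,b)$ there is an $N$ with $\langle a^N,b^N\rangle\cong F$, and passing to powers does not change the supports, so no exact dynamical alignment is required. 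As it stands the proposal is a plan whose two hardest steps are unexecuted. If completed, your route would have the merit of producing the compactly supported copy of $F$ directly, whereas the paper needs the $F<F'=F''$ observation precisely because the theorem it cites does not control supports.
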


We remark that the above was shown in part $(1)$ of Theorem $1.1$ in \cite{LodhaCoherent} without the additional condition involving the compact interval $J$.
To see that there is a copy of $F$ supported on a compact $J$, first assume that there is a copy of $F$ in $G$ supported on a non compact interval. 
Then recall that $F<F'=F''$, and that the groups of germs of $G$ at $\pm \infty$ are solvable. 

\begin{lem}\label{transitivity}
Let $G<\textup{Homeo}^+(\mathbf{R})$ be a coherent group action.
Then the following holds.
\begin{enumerate}
\item Let $U_1,U_2$ be compact intervals with nonempty interior in $\mathbf{R}$.
Then there is a $g\in G$ such that $U_1\cdot g\subset U_2$.
\item Let $U_1,U_2,V_1,V_2$ be compact intervals with nonempty interior in $\mathbf{R}$ such that $$sup(U_1)<inf(U_2)\qquad sup(V_1)<inf(V_2)$$
Then there is a $g\in G$ such that $U_1\cdot g\subset V_1$ and $U_2\cdot g\subset V_2$.
\end{enumerate}
\end{lem}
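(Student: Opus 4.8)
The plan is to reduce both statements to control over the images of the \emph{endpoints} of the intervals, and then to establish the resulting transitivity for points by combining a contraction coming from the embedded copy of Thompson's group $F$ with minimality of the action. The first and most useful observation is that since every element of $G$ is orientation preserving, the image of a compact interval $[a,b]$ under $h\in G$ is exactly the compact interval $[a\cdot h,\,b\cdot h]$. Consequently, for part (1) with $U_1=[a_1,b_1]$ and $U_2=[a_2,b_2]$ it suffices to find $g$ with $a_1\cdot g$ and $b_1\cdot g$ both in the open interval $(a_2,b_2)$, since then the whole image segment is trapped inside $U_2$. Likewise, writing the four endpoints in part (2) as $p_1<p_2<p_3<p_4$ (where $p_2=\sup U_1<\inf U_2=p_3$ by hypothesis), it suffices to find $g$ carrying $p_1,p_2$ into $\mathrm{int}(V_1)$ and $p_3,p_4$ into $\mathrm{int}(V_2)$. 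Thus everything reduces to mapping a prescribed ordered tuple of points into prescribed disjoint open target intervals.

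The engine for this is a contraction. Applying Lemma \ref{F}, I fix a compact interval $J=[c,d]$ carrying a copy of $F$ whose support lies in $J$, and inside it I choose an element $t$ lying above the identity on $(c,m)$ and below it on $(m,d)$ for some interior point $m$; then $m$ is a two-sided attracting fixed point whose basin is $\mathrm{int}(J)$, and powers of $t$ squeeze any compact subinterval of $\mathrm{int}(J)$ into any neighborhood of $m$ (while powers of $t^{-1}$ expand). Minimality (condition (1)) supplies the positioning: the orbit of every point is dense, so $m$, and hence a small enough interval around it, can be moved into any prescribed nonempty open set. Granting for the moment that the points of a given tuple can be brought into a single such basin, I contract them to a tiny interval near $m$ and ship that interval into the target by this density argument; this already yields part (1).

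The crux, and the step I expect to be the main obstacle, is precisely to cover a given compact interval by a \emph{translate} of the basin, that is, to find $h$ with $U_1\subset \mathrm{int}(J)\cdot h=(c\cdot h,\,d\cdot h)$. This is genuine order-preserving two-point transitivity, which minimality alone does not supply. Here I would invoke the one-sided elements $f,g$ of condition (3): $f$ pins a left ray and, having no fixed point on a ray running to $+\infty$, drives points there monotonically to $+\infty$ (after possibly replacing it by its inverse), while $g$ does the symmetric thing at $-\infty$. Using minimality I first straddle one endpoint of $U_1$ by the basin, say $c\cdot h<\sup U_1<d\cdot h$; I then take a suitable conjugate $b$ of $g$ whose active region sits to the right of $\sup U_1$, so that the right endpoint $d\cdot h$ is pinned, while its clean pushing ray contains the left endpoint $c\cdot h$, and iterate to drive $c\cdot h$ past $\inf U_1$. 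The moved basin $\mathrm{int}(J)\cdot(hb^{n})$ then contains all of $U_1$, and one uses $f$ rather than $g$ when it is the right endpoint that must be extended. The delicate point throughout is to arrange, by the choice of conjugating element, that the active part of the conjugated $f$ or $g$ does not disturb the endpoint one wishes to pin; this is where the richness of condition (3), together with the contraction already in hand, is used.

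For part (2) I would exploit the ordering hypotheses $\sup U_1<\inf U_2$ and $\sup V_1<\inf V_2$ to treat the two blocks $\{p_1,p_2\}$ and $\{p_3,p_4\}$ successively. First apply the part (1) construction to send $\{p_1,p_2\}$ into a tiny subinterval of $V_1$; the images of $p_3,p_4$ then lie somewhere to the right of $V_1$. Next apply a second element whose support is a compact interval disjoint from $V_1$ but containing both these images and all of $V_2$; such an element is a copy of the contraction apparatus localized to the right, obtained using $g$ so that it fixes $V_1$ pointwise, and repeating the squeeze-and-ship argument inside that compact interval lands $\{p_3,p_4\}$ in $V_2$. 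Since this second element fixes $V_1$, it leaves the first block in place, and the composition of the two elements is the desired $g$.
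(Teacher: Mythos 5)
First, a caveat on the comparison: the paper does not actually prove Lemma \ref{transitivity} — it is imported from Lemmas 3.4 and 4.4 of \cite{LodhaCoherent} — so your attempt can only be judged on its own terms. The overall shape (reduce to endpoints, manufacture a contraction, position things by minimality) is reasonable, but there is a genuine gap at exactly the step you flag as ``the crux.'' Your contraction, extracted from the copy of $F$ of Lemma \ref{F}, has a \emph{bounded} basin $\mathrm{int}(J)$, so you must fit an arbitrary compact interval $U_1$ inside a translate of a bounded open interval. Your proposed resolution requires a conjugate of the one-sided element whose compact ``transition zone'' (the interval between its moving ray and its fixed ray, a conjugate of $[r_1,r_2]$) lands inside a prescribed bounded window between the two endpoints you are juggling; but placing a compact interval inside a prescribed bounded interval is precisely part (1) of the lemma, and minimality only controls one point. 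Whichever order you choose the conjugator and the positioning element in, one of them is subject to a two-point condition, so the argument is circular where it matters. (There is also a soft circularity in invoking Lemma \ref{F} at all: in \cite{LodhaCoherent} the transitivity lemmas are inputs to the construction of the compactly supported copy of $F$.) The repair is to take the contraction directly from condition (3): if $p^*$ is the largest fixed point of $f$ (it exists and lies in $[r_1,r_2]$ since $f$ fixes $(-\infty,r_1)$ and moves every point of $(r_2,\infty)$), then $(p^*,\infty)$ is a single component of $\mathbf{R}\setminus \mathrm{Fix}(f)$, so after replacing $f$ by $f^{-1}$ if needed, the iterates $f^{-n}$ squeeze every compact subset of $(p^*,\infty)$ into any prescribed neighborhood of $p^*$. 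The basin is now a ray, so covering $U_1$ by a translate of it is the one-point condition $\inf(U_1)\cdot h>p^*$, which minimality does supply; part (1) then follows cleanly.

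Part (2) has a second gap. After sending $U_1$ into a small subinterval of $V_1$, all you know about $U_2\cdot g_1$ is that it lies to the right of $\sup(U_1\cdot g_1)$, which is a point \emph{inside} $V_1$; it may therefore still intersect $V_1$. Hence no element ``whose support is a compact interval disjoint from $V_1$'' can simultaneously contain $U_2\cdot g_1$ in its support and leave $U_1\cdot g_1$ fixed. What you need is an element fixing only $(-\infty,\sup(U_1\cdot g_1)]$ pointwise and realizing a relative version of part (1) on the complementary ray; this is built from conjugates of $f$ (not of $g$, which fixes a right ray and is the wrong one-sided element for pinning everything to the left), with the already-established part (1) used to place the conjugate's fixed ray and contraction point. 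None of this is fatal to your strategy, but as written both parts rest on unproved two-point positioning statements that are essentially equivalent to the lemma itself.
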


For proofs we refer the reader to Lemmas $3.4$ and $4.4$ in \cite{LodhaCoherent}.

\section{Markings on groups in the class $\mathcal{C}$}

In this section we shall prove Theorem \ref{Main}.
Let $G<\textup{Homeo}^+(\mathbf{R})$ be a group action that satisfies conditions $(1)-(4)$ prescribed in the introduction.
We also refer to the underlying group as $G$, which we assume has rank $k$.
Note that since topological conjugation does not affect the conditions, we can assume that the interval $I$ that witnesses condition $(4)$
is $[0,1]$.
Let $S=\{f_1,...,f_k\}\subset G$ be a generating set for $G$.
Let $B_n$ denote the $n$-ball of the Cayley graph of $(G,S)$.
The subgroup of $G$ consisting of elements that fix each point in $\mathbf{R}\setminus [0,1]$ is denoted by $H$ and we fix a generating set $\{h_1,...,h_m\}$ for it.
Let $$h_i=u_1^{(i)}...u_{l_i}^{(i)}\qquad 1\leq i\leq m$$ be words in $S$ (each $u_j^{(i)}\in S\cup S^{-1}$).

\begin{lem}\label{alpha}
For each $n\in \mathbf{N}$, there are elements $\alpha_1^{(n)},...,\alpha_k^{(n)}\in H'$ such that the following holds.
The $n$-ball of the Cayley graph of the group $\langle \alpha_1^{(n)},...,\alpha_k^{(n)}\rangle$ with the given marking
is isomorphic to the $n$-ball of $\mathbf{F}_k$ with the standard marking. 
\end{lem}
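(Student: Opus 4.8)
The plan is to reduce the statement to a finite, purely combinatorial requirement and then to meet it by distributing ``witnesses'' across disjoint subintervals of $[0,1]$. Recall that the $n$-ball of the marked Cayley graph of $\langle \alpha_1^{(n)},\dots,\alpha_k^{(n)}\rangle$ is isomorphic to that of $\mathbf{F}_k$ with the standard marking precisely when no nontrivial reduced word in the letters $\alpha_1^{(n)},\dots,\alpha_k^{(n)}$ of length at most $2n+1$ represents the identity (a standard computation, since distinguishing two vertices of norm $\le n$, or an edge between them, only involves products of bounded length). As there are only finitely many such reduced words, say $w_1,\dots,w_N$, it suffices to produce elements $\alpha_1^{(n)},\dots,\alpha_k^{(n)}\in H'$ for which $w_j(\alpha_1^{(n)},\dots,\alpha_k^{(n)})\neq 1$ holds simultaneously for every $j\le N$.

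To build these, I would first fix $N$ pairwise disjoint compact intervals $J_1,\dots,J_N\subset(0,1)$. Starting from the copy of $F$ supported on a compact interval $J$ furnished by Lemma \ref{F}, I would use Lemma \ref{transitivity} to select $g_j\in G$ with $J\cdot g_j\subset J_j$, so that $F_j:=g_j^{-1}Fg_j$ is a copy of Thompson's group $F$ with support inside $J_j\subset(0,1)$; in particular $F_j\le H$. Restricting to a compact subinterval lying in the interior of $J_j$ yields a further copy $\hat F_j\cong F$ whose elements have trivial germs at both endpoints of $J_j$; since those germs generate the abelianization $F_j/F_j'\cong\mathbf{Z}^2$, we get $\hat F_j\le F_j'\le H'$, still supported in $J_j$. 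Now $\hat F_j$ is a copy of $F$ and hence does not satisfy a law (see \cite{BrinSq}), so for the single nontrivial word $w_j$ there exist $a_1^{(j)},\dots,a_k^{(j)}\in \hat F_j\subseteq H'$ with $w_j(a_1^{(j)},\dots,a_k^{(j)})\neq 1$, and these elements are supported in $J_j$.

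Finally I would set $\alpha_i^{(n)}=a_i^{(1)}a_i^{(2)}\cdots a_i^{(N)}$ for each $i\le k$. Because the supports $J_1,\dots,J_N$ are pairwise disjoint the factors commute, so each $\alpha_i^{(n)}$ is a well-defined element of $H'$, and on $J_j$ it agrees with $a_i^{(j)}$ while every other factor acts there as the identity. Consequently the restriction of $w_j(\alpha_1^{(n)},\dots,\alpha_k^{(n)})$ to $J_j$ coincides with $w_j(a_1^{(j)},\dots,a_k^{(j)})$, which is nontrivial; hence $w_j(\alpha_1^{(n)},\dots,\alpha_k^{(n)})\neq 1$ for every $j\le N$, and the desired ball isomorphism follows. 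The only genuinely delicate point is ensuring the witnesses land in $H'$ rather than merely in $H$, which is exactly why I pass to the subgroups $\hat F_j\le F_j'$ via the germ (abelianization) obstruction; the remainder is just the disjoint-support device, and this is why the present lemma is considerably easier than the task of \emph{marking $G$ itself} required by Theorem \ref{Main}.
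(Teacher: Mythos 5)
Your argument is correct and is essentially the paper's own proof, unpacked: the paper kills the finitely many relevant words by embedding a finite direct sum of disjointly supported copies of $F$ into $F'<H'$ and using that $F$ satisfies no law, which is exactly your disjoint-support device. The only loose step is your germ-based justification that $\hat F_j\le F_j'$ (for a topological copy of $F$ the germ map at the endpoints of the support need not compute the abelianization), but the fact you actually need --- that $F'$ contains a copy of $F$, equivalently that finite direct sums of copies of $F$ embed in $F'$ --- is standard and is invoked without proof in the paper as well.
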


\begin{proof}
Recall from Lemma \ref{F} that there is a compact interval $J$ such that there are elements $f,g\in G$
whose supports lie in $J$ and which generate a copy of Thompson's group $F$.
From Lemma \ref{transitivity}, we find $h\in G$ such that $J\cdot h\subset [0,1]$. 
Then $h^{-1} f h, h^{-1} g h\in H$ generate a copy of $F$.
Recall that Thompson's group $F$ does not satisfy a law.
It follows that the infinite direct product of copies of $F$ contains nonabelian free subgroups of each rank.
Since the direct sum of any finite number of copies of $F$ embeds in $F'$, there are elements $\alpha_1^{(n)},...,\alpha_k^{(n)}\in F'< H'$ that have the desired property.
\end{proof}


The existence of certain \emph{special elements} in our group action will be crucial in our proof.
We say that an element $f\in \textup{Homeo}^+(\mathbf{R})$ is $J$-\emph{special} for a compact interval $J=[r_1,r_2]$ if:
\begin{enumerate}
\item $x\cdot f=x$ for each $x\in (-\infty, r_1]$.
\item $x\cdot f>x$ for each $x\in (r_2,\infty)$.
\end{enumerate}

\begin{lem}\label{specialelements}
For each compact interval $I\subset \mathbf{R}$ there is an $I$-special element $f\in G$.
\end{lem}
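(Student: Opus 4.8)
The plan is to take the element supplied by condition $(3)$, which already fixes a left half-line and moves a right half-line, and to slide its transition region into $I$ by a conjugation, using Lemma \ref{transitivity} to produce the conjugator. Write $I=[a,b]$ with $a<b$, and let $f_0$, $r_1<r_2$ be as in condition $(3)$, so that $f_0$ fixes every point of $(-\infty,r_1)$ and hence, by continuity, of $(-\infty,r_1]$, while having no fixed point in $(r_2,\infty)$. On the connected set $(r_2,\infty)$ the continuous function $x\mapsto x\cdot f_0-x$ never vanishes, so it has constant sign; replacing $f_0$ by $f_0^{-1}$ if this sign is negative (which does not change the fixed half-line $(-\infty,r_1]$), I may assume $x\cdot f_0>x$ for all $x>r_2$. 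Thus $f_0$ is itself $[r_1,r_2]$-special.

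Next I would use Lemma \ref{transitivity}(1) with $U_1=[r_1,r_2]$ and $U_2=I$ to obtain $h\in G$ with $[r_1,r_2]\cdot h\subseteq[a,b]$, that is $r_1\cdot h\ge a$ and $r_2\cdot h\le b$; since $h$ is orientation preserving and hence strictly increasing, these are equivalent to $a\cdot h^{-1}\le r_1$ and $r_2\le b\cdot h^{-1}$. I then set $f=h^{-1}f_0h\in G$ and claim it is $I$-special.

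The verification is a routine transport of the two defining properties of $f_0$ through $h$. For $x\le a$ one gets $x\cdot h^{-1}\le a\cdot h^{-1}\le r_1$, so $x\cdot h^{-1}$ is fixed by $f_0$ and therefore $x\cdot f=x$; this gives property $(1)$. For $x>b$ one gets $x\cdot h^{-1}>b\cdot h^{-1}\ge r_2$, so $x\cdot h^{-1}$ lies in the region where $f_0$ strictly increases; writing $y=x\cdot h^{-1}$ we have $y\cdot f_0>y$, and applying the strictly increasing map $h$ gives $x\cdot f=(y\cdot f_0)\cdot h>y\cdot h=x$, which is property $(2)$.

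The one genuinely delicate point is the sign issue in the first paragraph: condition $(3)$ only asserts that $f_0$ displaces points near $+\infty$, not the direction of the displacement, so the constant-sign argument and the possible replacement of $f_0$ by its inverse are exactly what is needed to meet the inequality $x\cdot f>x$ in the definition of a special element. The remaining steps rely only on the order preserving conjugation and on Lemma \ref{transitivity} to position $[r_1,r_2]$ inside $I$, and I expect these to be entirely straightforward.
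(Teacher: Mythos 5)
Your proposal is correct and follows essentially the same route as the paper: extract a special element for some interval from condition $(3)$ and conjugate it into $I$ using Lemma \ref{transitivity}. The only difference is that you spell out the sign normalization (replacing $f_0$ by $f_0^{-1}$ if it displaces points downward near $+\infty$) and the conjugation check, both of which the paper leaves implicit.
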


\begin{proof}
Such an element for at least one compact interval $I_1$ exists in $G$ thanks to condition $(3)$ prescribed in the introduction.
Let $g$ be a $I_1$-special element.
Using Lemma \ref{transitivity}, we find an element $g_1\in G$ such that $I_1\cdot g_1\subset I$.
Then $g_1^{-1} g g_1$ is the required $I$-special element.
\end{proof}

Throughout the rest of the section we fix a $[0,\frac{1}{2}]$-special element $\tau_1\in G$ and a $[1,\frac{3}{2}]$-special element $\tau_2\in G$.
We also refer to a choice of $S$-words representing $\tau_1,\tau_2$ as $\tau_1,\tau_2$ respectively.

A set $I\subset \mathbf{R}$ which is a union of finitely many pairwise disjoint compact intervals is said to be \emph{$\tau$-compatible},
if for any choice of homeomorphisms $p_1,...,p_k$ with supports contained in $I$, the following holds.
Let $\tau_1',\tau_2'$ be the homeomorphisms prescribed by the words obtained by replacing each occurrence of $f_i^{\pm 1}\in S\cup S^{-1}$ in $\tau_1,\tau_2$ by $(p_if_i)^{\pm 1}$ for $1\leq i\leq k$.
Then $\tau_1'$ is also $[0,\frac{1}{2}]$-special and $\tau_2'$ is also $[1,\frac{3}{2}]$-special.

The following is a straightforward exercise involving uniform continuity and compactness, and we leave it for the reader.
Note that one may choose $M_1>>2$ here.

\begin{lem}\label{tiny}
There is an $M_1\in \mathbf{R}$ such that for each compact interval $J\subset (M_1,\infty)$ there is an $\epsilon>0$ for which the following holds.
Let $I\subset J$ be a union of finitely many compact intervals that are pairwise disjoint and of length at most $\epsilon$.
Then $I$ is $\tau$-compatible.
\end{lem}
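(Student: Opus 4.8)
The plan rests on one elementary observation about the perturbations. If $p$ is a homeomorphism whose support is contained in $I$, then $p$ fixes the endpoints of every component of $I$ and, being orientation preserving, carries each component to itself; hence $p$ fixes every point outside $I$ and moves no point by more than the largest component length $\epsilon$. Consequently, if the prefix orbit $\{z\cdot(u_1\cdots u_l)\}_{l}$ of a point $z$ under the successive prefixes of the \emph{unperturbed} word $\tau_j=u_1\cdots u_N$ never meets $I$, then replacing each letter $f_i^{\pm 1}$ by $(p_if_i)^{\pm 1}$ changes nothing along that orbit: in the factor $p_if_i$ the map $p_i$ acts on a point lying outside $I$ and so does nothing, and in the factor $f_i^{-1}p_i^{-1}$ the map $p_i^{-1}$ acts on the next orbit point, again outside $I$. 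A short induction on $l$ then gives $z\cdot\tau_j'=z\cdot\tau_j$. I would therefore first fix $M_1$ to be larger than each of the finitely many images of $0$ and $\frac{1}{2}$ under prefixes of $\tau_1$, and of $1$ and $\frac{3}{2}$ under prefixes of $\tau_2$; this is a finite collection of reals, and one indeed takes $M_1\gg 2$.

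With this choice the two ``fixing'' clauses of $\tau$-compatibility are immediate and require no smallness of $\epsilon$: for $z\leq 0$ each prefix $v$ of $\tau_1$ sends the ray $(-\infty,0]$ into $(-\infty,0\cdot v]$ with $0\cdot v<M_1\leq\inf I$, so the prefix orbit avoids $I\subset(M_1,\infty)$ and $z\cdot\tau_1'=z\cdot\tau_1=z$; the same argument with $\tau_2$ and the ray $(-\infty,1]$ handles $\tau_2'$. For the positivity clause of $\tau_1'$ I would split $(\frac{1}{2},\infty)$ into three ranges. Near the left endpoint, since the prefix orbit of $\frac{1}{2}$ lies below $M_1\leq\inf I$, continuity of the finitely many prefix maps gives an $M_2>\frac{1}{2}$ so that for $\frac{1}{2}<z\leq M_2$ the whole prefix orbit stays below $\inf I$; thus $z\cdot\tau_1'=z\cdot\tau_1>z$. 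Far to the right, since each prefix $v$ fixes $+\infty$ there is an $R\geq\sup I$ with $z\cdot v>\sup I$ for every prefix whenever $z>R$; again the orbit avoids $I$, so $z\cdot\tau_1'=z\cdot\tau_1>z$.

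It remains to treat the compact middle range $M_2\leq z\leq R$ (possibly empty), and here alone do I use that $\epsilon$ may be chosen small, depending on $J$. On this compact set the push $z\mapsto z\cdot\tau_1-z$ attains a positive minimum $\delta>0$. Because each $p_i$ lies within $\epsilon$ of $\id$ in the uniform norm and composition of homeomorphisms is continuous for uniform convergence on compact sets, the fixed-length word map $\tau_1'$ converges to $\tau_1$ uniformly on compacta as $\epsilon\to 0$, with an estimate depending only on the moduli of continuity of the $f_i$ on a bounded region and on $\sup_i\sup_x|x\cdot p_i-x|\leq\epsilon$; hence for $\epsilon$ small enough $|z\cdot\tau_1'-z\cdot\tau_1|<\delta$ on $[M_2,R]$ uniformly in the choice of the $p_i$, giving $z\cdot\tau_1'>z\cdot\tau_1-\delta\geq z$. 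Running the analogous three ranges for $\tau_2'$ about the point $\frac{3}{2}$ and letting $\epsilon$ be the smaller of the two thresholds shows that $\tau_1'$ is $[0,\frac{1}{2}]$-special and $\tau_2'$ is $[1,\frac{3}{2}]$-special, i.e. $I$ is $\tau$-compatible. The one genuine obstacle — and the reason $I$ is pushed out into $(M_1,\infty)$ — is that the push degenerates to $0$ as $z\to\frac{1}{2}^+$ and $z\to\frac{3}{2}^+$, so no uniform-smallness estimate can survive near those endpoints; this is precisely why the boundary ranges must instead be controlled by the orbit-avoidance mechanism, which forces $I$ to sit far to the right.
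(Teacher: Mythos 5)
The paper gives no proof of this lemma, dismissing it as ``a straightforward exercise involving uniform continuity and compactness,'' and your writeup is a correct solution of that exercise along exactly the hinted lines. The one ingredient the hint does not spell out --- choosing $M_1$ above all prefix images of $0,\frac{1}{2},1,\frac{3}{2}$ so that the orbits relevant to the fixing clauses, the far-right range, and the degenerate ranges just above $\frac{1}{2}$ and $\frac{3}{2}$ avoid $I$ entirely, leaving only a compact middle range for the uniform-continuity estimate --- is supplied correctly; the only (harmless) nitpick is that $M_2$ and $R$ should be pinned to $\inf J$ and $\sup J$ rather than to $\inf I$ and $\sup I$, so that they manifestly do not depend on the particular $I\subset J$.
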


Fix an $M_2\in \mathbf{R}$ such that for each $1\leq i\leq m, 1\leq j\leq l_i$ the following holds:
\begin{enumerate}
\item $([M_2-1,\infty)\cdot u_1^{(i)}...u_j^{(i)})\cap [0,2]=\emptyset$.
\item $([0,2]\cdot u_1^{(i)}...u_j^{(i)})\cap [M_2-1,\infty)=\emptyset$.
\end{enumerate}
Recall that $B_n$ is the ball of radius $n$ in $(G,S)$.
We say that a compact interval $J$ is \emph{$n$-admissible} if:
\begin{enumerate}
\item The intervals in the set $\mathcal{X}=\{J\cdot f\mid f\in B_n\}$ are pairwise disjoint and lie in $(M_2,\infty)$.

\item The union of the intervals in $\mathcal{X}$ is $\tau$-compatible.

\item For each $f,g\in B_n$ and $J\in \mathcal{X}$, if $J\cdot f\cap J\cdot g\neq \emptyset$
then $f\restriction J=g\restriction J$.
\end{enumerate}

\begin{lem}\label{admissible}
For each $n\in \mathbf{N}$, there exists a compact interval that is $n$-admissible.
\end{lem}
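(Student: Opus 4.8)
The plan is to place $J$ as a short interval far to the right, past the point where every element of the $2n$-ball has settled into affine behavior near $+\infty$, and then to verify the three conditions by compactness and uniform continuity. Write $D=\{fg^{-1}\mid f,g\in B_n\}$, a finite subset of $G$ contained in $B_{2n}$. The starting observation is that for $f,g\in B_n$ and $h=fg^{-1}$ one has $J\cdot f\cap J\cdot g\neq\emptyset$ if and only if $(J\cdot h)\cap J\neq\emptyset$ (apply the bijection $\cdot\,g^{-1}$), while $f\restriction J=g\restriction J$ if and only if $h$ fixes $J$ pointwise, and $J\cdot f=J\cdot g$ exactly when $J\cdot h=J$. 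Thus conditions $(1)$ and $(3)$ reduce to a single dichotomy for each $h\in D$ acting on $J$: either $h$ fixes $J$ pointwise, or $(J\cdot h)\cap J=\emptyset$.

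First I would use condition $(2)$ of the definition of $\mathcal{C}$ to control the elements of $D$ near $+\infty$. Since the group of germs at $+\infty$ is conjugate to a subgroup of the affine group, each $h\in D$ is, on some half-line $(c_h,\infty)$, topologically conjugate to an affine map; as an affine map has at most one fixed point, $h$ has at most one transition point in $(c_h,\infty)$, and beyond it $h$ is either the identity or fixed-point-free. Because $D$ is finite, there is a single $R$ such that on $(R,\infty)$ every $h\in D$ is either the identity or has no fixed point, and hence, by the intermediate value theorem, displaces every point of $(R,\infty)$ in one fixed direction. Enlarging $R$ if necessary, I also arrange that $x>R$ implies $x\cdot f>\max(M_1,M_2)$ for all $f\in B_n$; this is possible since each $f\in\textup{Homeo}^+(\mathbf{R})$ is an increasing bijection, so $x\cdot f\to+\infty$ as $x\to+\infty$, and $B_n$ is finite.

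Next I fix a compact interval $J_0\subset(R,\infty)$ and extract two positive constants. Since the fixed-point-free members of $D$ have no fixed point in $J_0\subset(R,\infty)$, compactness gives $\delta:=\min\{\,|x\cdot h-x|:x\in J_0,\ h\in D\text{ fixed-point-free on }(R,\infty)\,\}>0$ (take $\delta=1$ if there are no such $h$). Let $J'$ be a compact interval with $\bigcup_{f\in B_n}(J_0\cdot f)\subset J'\subset(M_1,\infty)$, which exists by the choice of $R$, and let $\epsilon>0$ be the constant furnished for $J'$ by Lemma \ref{tiny}. Finally I choose $J\subset J_0$ short enough that $\length{J}<\delta$ and, using uniform continuity of the finitely many $f\in B_n$ on $J_0$, that $\length{J\cdot f}\leq\epsilon$ for every $f\in B_n$.

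It then remains to check the three conditions for this $J$. For each $h\in D$: if $h$ is the identity on $(R,\infty)\supseteq J$ then $J\cdot h=J$, yielding the equal case of $(1)$ and the conclusion of $(3)$; if $h$ is fixed-point-free on $(R,\infty)$ then $\length{J}<\delta$ forces $(J\cdot h)\cap J=\emptyset$, yielding the disjoint case of $(1)$ while the hypothesis of $(3)$ is vacuous. This settles the disjointness in $(1)$ together with $(3)$, and the inclusion $\mathcal{X}\subset(M_2,\infty)$ holds by the choice of $R$. For $(2)$, the distinct members of $\mathcal{X}$ are pairwise disjoint compact intervals contained in $J'\subset(M_1,\infty)$ and of length at most $\epsilon$, so their union is $\tau$-compatible by Lemma \ref{tiny}. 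The one genuinely delicate point—and the only place where the hypotheses on $G$ are essential—is the passage to a half-line $(R,\infty)$ on which all of $D$ is \emph{simultaneously} identity-or-fixed-point-free; this is exactly what the affine germ condition buys us, and everything after it is routine shrinking of $J$ via compactness and uniform continuity.
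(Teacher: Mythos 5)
Your strategy coincides with the paper's: use the affine-germ condition at $+\infty$ to produce a half-line beyond which every element of $B_{2n}$ is either the identity or fixed-point-free, place $J$ far out on that half-line, and then shrink it until disjointness and $\tau$-compatibility hold. Your execution of the shrinking step (the displacement constant $\delta$ by compactness, the constant $\epsilon$ from Lemma \ref{tiny}, uniform continuity of the finitely many elements of $B_n$) is more explicit than the paper's own write-up and is correct as far as it goes.

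There is, however, one genuine gap. Condition $(3)$ of $n$-admissibility quantifies over \emph{every} interval of $\mathcal{X}$: for all $f,g\in B_n$ and every $K\in\mathcal{X}$, if $K\cdot f\cap K\cdot g\neq\emptyset$ then $f\restriction K=g\restriction K$. Your opening reduction silently replaces this by the single instance $K=J$, and your verification treats only that instance. The general case does not follow formally from it: for $K=J\cdot w$ with $w\in B_n$, the relevant element is still $h=fg^{-1}\in D$, but it now acts on $J\cdot w$ rather than on $J$, and the two quantitative inputs you need --- that $J\cdot w\subset(R,\infty)$ so the identity-or-fixed-point-free dichotomy applies there, and that $J\cdot w$ is shorter than the minimal displacement of the fixed-point-free elements of $D$ on $J\cdot w$ --- were only arranged for $J\subset J_0$ itself, not for its translates. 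This is precisely the point the paper handles by running its fixed-point argument at an arbitrary point $x_1$ of the orbit $X=\{x\cdot f\mid f\in B_n\}$ rather than at $x$ alone. The repair is routine with your own tools: enlarge $R$ so that $J_0\cdot w\subset(R,\infty)$ for every $w\in B_n$, compute the displacement constant $\delta$ over the larger compact set $J'$ rather than over $J_0$, and shrink $J$ until $|J\cdot w|<\min(\delta,\epsilon)$ for every $w\in B_n$. With that modification your proof is complete and matches the paper's.
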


\begin{proof}

Recall that the group of germs of $G$ at $\infty$ is topologically conjugate to a subgroup of the affine group,
and that each non-trivial affine transformation of the real line has at most one fixed point in the real line.
(Recall from the preliminaries that $Tran(f)$ is the set of transition points of $f$.)
Hence the following is defined $$M_3=Sup\{Tran(f)\mid f\in B_{2n}\}<\infty$$

Let $x\in \mathbf{R}$ be such that $$X=\{x\cdot f\mid f\in B_n\}\subset (M_1,\infty)\cap (M_2,\infty)\cap (M_3,\infty)$$
Consider $x_1,x_2\in X$, and two distinct elements $f,g\in B_n$ such that $$x_1\cdot f=x_1\cdot g=x_2$$
Then $fg^{-1}$ fixes $x_1$. 
Since $fg^{-1}\in B_{2n}$ from our hypothesis above it follows that $fg^{-1}$ must be the identity on a neighbourhood of $x_1$.
Hence the restriction of $f,g$ on a suitable neighbourhood of $x_1$ is the same. 

Condition $(3)$ needs to be guaranteed for finitely many elements and intervals. 
Hence using the above observation this can be settled by choosing a sufficiently small compact interval containing $x$.
Conditions $(1),(2)$ can be further guaranteed by choosing an even smaller interval if needed.
For $(2)$ one applies Lemma \ref{tiny} to the compact interval $[inf(X),sup(X)]$.
It follows that a sufficiently small compact interval containing $x$ is $n$-admissible.
\end{proof}

Now we shall conclude the proof of Theorem \ref{Main}.
Using Lemma \ref{admissible}, for each $n\in \mathbf{N}$ we find a compact interval $I_n\subset \mathbf{R}$ that is $n$-admissible.
Using Lemma \ref{transitivity},
we can find an element $g\in G$ such that $[0,1]\cdot g\subset I_n$.
For each $1\leq i\leq k$, let $$\beta_i^{(n)}=g^{-1} \alpha_i^{(n)} g$$

We define the following equivalence relation on $B_n$.
For $f,g\in B_n$, $f\equiv g$ if $f\restriction I_n=g\restriction I_n$.
We choose a set of representatives for the equivalence classes and denote it by $\widetilde{B_n}\subset B_n$.

For each $1\leq i\leq k$, we define 
$$p_i^{(n)}=\prod_{f\in \widetilde{B_n}} f^{-1} \beta_i^{(n)} f\qquad s_i^{(n)}=p_i^{(n)} f_i$$
Let $G_n$ be the group generated by $S_n=\{s_1^{(n)},...,s_k^{(n)}\}$.

{\bf Claim}:
\begin{enumerate}
\item The $n$-ball of the Cayley graph of $(G_n,S_n)$ is isomorphic to the $n$-ball of the free group $\mathbf{F}_k$
with the standard marking.
\item $G_n=G$.
\end{enumerate}

The first part of the claim is clear from the construction.
The restriction of the action of any freely reduced $S_n$-word of length at most $n$ to the interval $I_n$ will be non-trivial.
We shall prove the second part.
Note that $G_n$ is a subgroup of $G$ by definition.
It suffices to show that $p_1^{(n)},...,p_k^{(n)}\in G_n$.
This would imply that $f_1,...,f_k\in G_n$ and hence $G_n=G$.

Let $\nu_1,\nu_2$ be the homeomorphisms prescribed by the words obtained by replacing each occurrence of $f_i^{\pm 1}\in S\cup S^{-1}$ in $\tau_1,\tau_2$ by $(p_i^{(n)}f_i)^{\pm 1}$ for $1\leq i\leq k$.
Since $I_n$ is $n$-admissible (and in particular the union of the intervals in the set $\{I_n\cdot f\mid f\in B_n\}$ is $\tau$-compatible), it follows that $\nu_1$ is $[0,\frac{1}{2}]$-special and $\nu_2$ is $[1,\frac{3}{2}]$-special.
Clearly, $\nu_1,\nu_2\in G_n$.

Note that for a sufficiently large $l\in \mathbf{N}$, $$\nu_1^{l} p_i^{(n)} \nu_1^{-l}\in H$$ for each $1\leq i\leq k$.

{\bf Subclaim 1}: $\nu_1^{l} p_i^{(n)} \nu_1^{-l}\in H'$.

It suffices to show this for $\nu_1^l \beta_i^{(n)} \nu_1^{-l}$, since the same argument applies to each of the conjugates of $\beta_i^{(n)}$ whose product equals $p_i^{(n)}$.
Recall that we chose $\alpha_i^{(n)}\in H'$.
Also recall that we chose an element $g\in G$ such that $[0,1]\cdot g\subset I_n$ and
for each $1\leq i\leq k$ we had let $$\beta_i^{(n)}=g^{-1} \alpha_i^{(n)} g$$
Since $$[0,1]\cdot g \nu_1^{-l}\subset [0,1]$$ it follows that $$\nu_1^l g^{-1} H g \nu_1^{-l}\subseteq H$$
Therefore $$\nu_1^l g^{-1} H' g \nu_1^{-l}\subseteq H'\qquad \nu_1^l \beta_i^{(n)} \nu_1^{-l}\in H'$$
This proves the claim.
Thanks to Subclaim $1$, we reduce the proof that $p_1^{(n)},..., p_k^{(n)}\in G_n$ to the following.

{\bf Subclaim 2}: $H'\subset G_n$. 

Recall that $h_1,...,h_m$
are words in $S$ that generate the subgroup $H$.
Let $\zeta_1,...,\zeta_m\in G_n$ be the elements prescribed by the words obtained by replacing each occurrence of $f_i^{\pm 1}\in S\cup S^{-1}$ in $h_1,...,h_m$ by $(p_if_i)^{\pm 1}$ for $1\leq i\leq k$.
By construction, since $I_n$ is $n$-admissible, the following is immediate for each $1\leq i\leq m$:
\begin{enumerate}
\item $\zeta_i\restriction [0,1]=h_i\restriction [0,1]$.
\item There is a compact interval $L\subset (2,\infty)$ such that $Supp(\zeta_i)\subset [0,1]\cup L$.
\end{enumerate}
Consider a pair of elements $g_1,g_2\in H$.
Let $\lambda_1,\lambda_2\in G_n$ be the elements prescribed by replacing the letters $h_1^{\pm 1},...,h_m^{\pm 1}$ by $\zeta_1^{\pm 1},...,\zeta_m^{\pm 1}$ in the respective words representing $g_1,g_2$.
It follows that:
\begin{enumerate}
\item $\lambda_1\restriction [0,1]=g_1\restriction [0,1]$ and $\lambda_2\restriction [0,1]=g_2\restriction [0,1]$.
\item There is a compact interval $L\subset (2,\infty)$ such that $$Supp(\lambda_1)\subset [0,1]\cup L\qquad Supp(\lambda_2)\subset [0,1]\cup L$$
\end{enumerate}
For a sufficiently large $l\in \mathbf{N}$, we obtain  $$[g_1,g_2]=[\nu_2^l \lambda_1 \nu_2^{-l},  \lambda_2]\in G_n$$
Since $g_1,g_2$ were arbitrary elements of $H$, we obtain that $H'\subset G_n$.
This finishes the proof.

\section{The special case of Thompson's group $F$}
In this section we provide a short proof for the special case of Thompson's group $F$ for the convenience of the reader.
We denote the free group of rank $2$ by $\mathbf{F}_2$, and Thompson's group $F$ by $F$ .
Recall that $F$ is isomorphic to the group of piecewise linear homeomorphisms of $\mathbf{R}$ generated by 
$$f_1(t)=t+1\qquad
f_2(t)=
\begin{cases}
 t&\text{ if }t\leq 0\\
 2t&\text{ if }0\leq t\leq 1\\
 t+1&\text{ if }1\leq t\\
\end{cases}
$$

In \cite{Brin}, the following was shown.

\begin{thm}
(Brin) The free group of rank $2$ is a limit of $2$-markings of $F$.
\end{thm}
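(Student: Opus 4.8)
The statement is the special case $G = F$ of Theorem~\ref{Main}, so the plan is to run the construction of Section~3 with every abstract input replaced by an explicit element; this is what makes the argument short and removes the computer-assisted computations of \cite{Brin}. Realize $F = \langle f_1, f_2\rangle$ as in the statement and let $H \leq F$ be the subgroup of elements supported in $[0,1]$. Since the elements of $F$ that fix everything outside a compact dyadic interval form a canonical copy of $F$, the group $H$ is isomorphic to $F$ and is finitely generated, so condition~$(4)$ holds with $I = [0,1]$ and neither Lemma~\ref{F} nor Lemma~\ref{transitivity} is needed to locate it. For the special elements I would take $\tau_1 = f_2$, which fixes $(-\infty, 0]$ and moves every point of $(0,\infty)$ strictly upward, hence is $[0,\tfrac12]$-special, and for $\tau_2$ an explicit conjugate of $f_2$ by a power of $f_1$ whose transition point is translated to $1$, which is then $[1, \tfrac32]$-special; this replaces Lemma~\ref{specialelements}. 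Finally the free $n$-ball elements $\alpha_1^{(n)}, \alpha_2^{(n)} \in H'$ are obtained as in Lemma~\ref{alpha}: $F$ satisfies no law, a large finite direct power $F^N$ embeds in $H'$ with support in a compact interval, and the $n$-ball constrains only the finitely many reduced words of length $\leq n$, so finitely many coordinates suffice.

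With these explicit ingredients I would carry out the construction verbatim. First I would choose the $n$-admissible interval $I_n$. The only feature of $F$ that is needed here is that every element acts by a translation on a neighborhood of $+\infty$; taking $I_n$ short and to the right of every transition point of every element of $B_{2n}$, the translates $\{I_n \cdot f \mid f \in B_n\}$ either coincide or are pairwise disjoint and lie in a region where $B_n$ acts by translations, which yields conditions $(1)$ and $(3)$ at once, and after shrinking $I_n$ Lemma~\ref{tiny} supplies condition~$(2)$. I would then pick an explicit dyadic $g \in F$ with $[0,1] \cdot g \subset I_n$, set $\beta_i^{(n)} = g^{-1} \alpha_i^{(n)} g$ and $p_i^{(n)} = \prod_{f \in \widetilde{B_n}} f^{-1} \beta_i^{(n)} f$, and define the marking $S_n = \{ p_1^{(n)} f_1,\ p_2^{(n)} f_2 \}$. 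That the $n$-ball of $(F, S_n)$ is the free $n$-ball is immediate from the construction: a reduced $S_n$-word of length $\leq n$ restricts on $I_n$ and its disjoint translates to a corresponding reduced word in the $\alpha_i^{(n)}$, which is nontrivial by the choice in Lemma~\ref{alpha}.

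The hard part, exactly as in Theorem~\ref{Main}, is the reverse inclusion $G_n = F$, that is, recovering the original generators $f_1, f_2$ inside $G_n = \langle S_n\rangle$, equivalently showing $p_1^{(n)}, p_2^{(n)} \in G_n$. Here I would substitute $f_i \mapsto p_i^{(n)} f_i$ into the words $\tau_1, \tau_2$ to obtain $\nu_1, \nu_2 \in G_n$, which remain $[0,\tfrac12]$- and $[1,\tfrac32]$-special because $I_n$ is $\tau$-compatible. Conjugating $p_i^{(n)}$ by a high power of $\nu_1$ drives its far-right support down into $[0,1]$, so the conjugate lies in $H$; and since $\alpha_i^{(n)} \in H'$ while conjugation by $g\nu_1^{-l}$ carries $H'$ into $H'$, in fact $\nu_1^{l} p_i^{(n)} \nu_1^{-l} \in H'$. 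It then remains to prove $H' \subseteq G_n$, which is the crux: for an arbitrary commutator $[g_1, g_2]$ of elements of $H$ one substitutes $f_i \mapsto p_i^{(n)} f_i$ to obtain $\lambda_1, \lambda_2 \in G_n$ that agree with $g_1, g_2$ on $[0,1]$ but acquire extra support in a far-right interval $L$, and separating those two supports by conjugating one factor by a high power of $\nu_2$ yields $[g_1, g_2] = [\nu_2^{l} \lambda_1 \nu_2^{-l}, \lambda_2] \in G_n$. The main obstacle is precisely this bookkeeping with supports: one must check that $\nu_1, \nu_2$ move the auxiliary far-right supports exactly as the unmodified $\tau_1, \tau_2$ would, which is guaranteed only by $\tau$-compatibility of the admissible interval, and that the conjugations deposit the stray supports inside $[0,1]$ and $L$ so that the commutator identity holds on the nose.
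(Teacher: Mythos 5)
Your proposal is correct and follows essentially the same strategy as the paper's Section~4 proof: perturb the generators by products of far-right conjugates of free-$n$-ball elements of $F'$ placed on an admissible interval, then recover the original generators by using a special (contracting) element to relocate the stray supports and exhibit the commutator subgroup of the compactly supported copy of $F$ inside the new group. The paper's version streamlines your specialization of the general machinery by defining $p_n,q_n$ explicitly on consecutive integer intervals (exploiting that the $n$-ball acts by integer translations near $+\infty$) and by using $\alpha=g_n^{-1}h_ng_n$ in place of the substituted words $\nu_1,\nu_2$, but these are organizational simplifications rather than a genuinely different route.
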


We provide a new proof that avoids the considerable computations in \cite{Brin}.

\begin{proof}
We consider two subgroups $F_1,F_2\leq \textup{Homeo}^+(\mathbf{R})$, both isomorphic to $F$.
$F_1$ is the group generated by $f_1,f_2$ defined above.
$F_2$ is the subgroup of $F_1$ consisting of all homeomorphisms in $F_1$ that pointwise fix $\mathbf{R}\setminus [0,1]$.
Note that the restriction of $F_2$ to $[0,1]$ is precisely the standard copy of $F$ in $\textbf{PL}^+[0,1]$.

Recall that Thompson's group $F$ does not satisfy a law.
It follows that the free group of rank $2$ embeds in an infinite direct product of copies of $F$.
Since each finite direct sum of copies of $F$ embeds in $F'$, the following is immediate.
There is a sequence $(a_n,b_n), n\in \mathbf{N}$ of pairs of elements of $F'$ such that
the $n$-ball of the Cayley graph of $\langle a_n,b_n\rangle$ with respect to the generating set $\{a_n,b_n\}$ is isomorphic to that of $\mathbf{F}_2$ with the standard marking.
We identify $a_n,b_n$ with homeomorphisms in $F_2'$. 

Given an interval $I=[k_1,k_2]$ with $k_1,k_2\in \mathbf{Z}$, we define $p_{I,n}, q_{I,n}:\mathbf{R}\to \mathbf{R}$
as $$p_{I,n}(t)=
\begin{cases}
 t &\text{ if }t\notin I\\
 t\cdot f_1^{-k} a_n f_1^k&\text{ if }t\in [k,k+1]\subset I, k\in \mathbf{Z}\\
\end{cases}
$$
$$
q_{I,n}(t)=
\begin{cases}
 t&\text{ if }t\notin I\\
 t\cdot f_1^{-k} b_n f_1^k&\text{ if }t\in [k,k+1]\subset I, k\in \mathbf{Z}\\
\end{cases}
$$
Furthermore, we define $$g_{I,n}=p_{I,n} f_1\qquad h_{I,n}=q_{I,n} f_2$$

Let $$U=u_1...u_{l_1}\qquad V=v_1...v_{l_2}$$ be words in $\{f_1,f_2,f_1^{-1},f_2^{-1}\}$ that generate the subgroup $F_2\leq F_1$.
Fix an $m\in \mathbf{N}$ such that for each $1\leq i\leq l_1, 1\leq j\leq l_2$ 
$$([m-1,\infty)\cdot u_1...u_i)\cap [0,2]=\emptyset= ([m-1,\infty)\cdot v_1...v_j)\cap [0,2]$$
and 
$$([0,2]\cdot u_1...u_i)\cap [m-1,\infty)=\emptyset =([0,2]\cdot v_1...v_j)\cap [m-1,\infty)$$

For each $n\in \mathbf{N}$, we fix an interval $I_n=[m,m+2n+1]$.
For simplicity of notation, we denote $g_{I_n,n}, h_{I_n,n}$ as $g_n,h_n$, respectively and $p_{I_n,n},q_{I_n,n}$ as $p_n,q_n$ respectively.

{\bf Claim}: For each $n\in \mathbf{N}$:
\begin{enumerate}
\item The $n$-ball of the Cayley graph of $\langle g_n,h_n\rangle$ is isomorphic to the $n$-ball of $\mathbf{F}_2$ with the standard marking.
\item The elements $g_n,h_n$ generate $F_1$.
\end{enumerate}

The first part of the claim follows immediately from the definitions.
To see this, note that the action of any word in $g_n,h_n$ of length at most $n$ on the interval
$[m+n,m+n+1]$ will mimic the action of the corresponding word in $a_n,b_n$ on $[0,1]$ up to conjugation and post composition by integer translations.

We shall prove the second part.
It suffices to show that $p_n,q_n\in \langle g_n,h_n\rangle$, since $g_n=p_n f_1, h_n=q_nf_2$ and $\langle f_1,f_2\rangle=F_1$.
Note that $p_n,q_n$ can be conjugated inside the group $\langle g_n,h_n\rangle$ (indeed by a power of $h_n$) to elements whose closure of support is properly contained in $(0,1)$,
and hence lie in $F_2'$.
Hence it suffices to show that $F_2'\subset \langle g_n,h_n\rangle$.

Let $U_1,V_1$ be words obtained by replacing occurrences of $f_1,f_2,f_1^{-1},f_2^{-1}$ in $U,V$ respectively by $g_n,h_n,g_n^{-1},h_n^{-1}$.
From our careful choice of $m$, it follows that there is a compact interval $J\subset \mathbf{R}$ with $inf(J)>2$ such that:
\begin{enumerate}
\item For each $w\in \langle U_1,V_1\rangle$, $Supp(w)\subset [0,1]\cup J$.
\item The restrictions $U_1,V_1\restriction [0,1]$ generate $F_2\restriction [0,1]$.
\end{enumerate}
Therefore given an arbitrary commutator $w\in F_2'$, we can find $w_1,w_2\in \langle U_1,V_1\rangle$ such that $[w_1,w_2]\restriction [0,1]=w$.
We will show that $w\in \langle g_n,h_n\rangle$.
Let $\alpha=g_n^{-1}h_n g_n$.
Note that $\alpha\restriction [0,1]=id$ and $t\cdot \alpha>t$ for each $t\in (1,\infty)$.
For a sufficiently large $k\in \mathbf{N}$ it holds that $$w=[\alpha^k w_1 \alpha^{-k}, w_2]$$
Therefore, $w\in \langle g_n,h_n\rangle$.
Since $w$ was an arbitrary commutator in $F_2'$, it follows that $F_2'\subset \langle g_n,h_n\rangle$ and so $F_1=\langle g_n,h_n\rangle$.
Hence the required markings are $(F, \{g_n,h_n\})$ for $n\in \mathbf{N}$.
\end{proof}

\end{document}